\author{Benjamin Linowitz}
\address{Department of Mathematics\\Oberlin College\\Oberlin, OH 44074}
\email{benjamin.linowitz@oberlin.edu}
\author{D. B. McReynolds}
\address{Department of Mathematics\\Purdue University\\West Lafayette, IN 47907}
\email{dmcreyno@purdue.edu}
\author{Paul Pollack}
\address{Department of Mathematics\\University of Georgia\\Athens, GA 30602}
\email{pollack@uga.edu}
\author{Lola Thompson}
\address{Department of Mathematics\\Oberlin College\\Oberlin, OH 44074}
\email{lola.thompson@oberlin.edu}
\title{Bounded gaps between primes and the length spectra \\ of arithmetic hyperbolic $3$--orbifolds}
\DeclareMathAlphabet{\curly}{U}{rsfs}{m}{n}
\DeclareMathOperator{\Ram}{Ram}
\DeclareMathOperator{\Gal}{Gal}
\DeclareMathOperator{\Mat}{M}
\DeclareMathOperator{\N}{N}
\DeclareMathOperator{\PSL}{PSL}
\DeclareMathOperator{\SL}{SL}
\DeclareMathOperator{\tr}{tr}
\newtheorem{thm}{Theorem}[section]
\newtheorem{cor}[thm]{Corollary}
\newtheorem{prop}[thm]{Proposition}
\newtheorem{lem}[thm]{Lemma}
\theoremstyle{definition}
\theoremstyle{remark}
\newtheorem{proposition}{Proposition}[section]
\def\1{\mathbf{1}}
\theoremstyle{remark}
\theoremstyle{plain}
\newtheorem{theorem}[proposition]{Theorem}
\newtheorem{ques}{Question}
\def\C{\mathbf{C}}
\def\Cc{\curly{C}}
\def\Q{\mathbf{Q}}
\def\Pp{\curly{P}}
\def\N{\mathbf{N}}
\def\Q{\mathbf{Q}}
\def\Z{\mathbf{Z}}
\def\1{\mathbf{1}}
\def\Gal{\mathrm{Gal}}
\newcommand{\abs}[1]{\left\vert#1\right\vert}
\newcommand{\set}[1]{\left\{#1\right\}}
\newcommand{\leg}[2]{\genfrac{(}{)}{}{}{#1}{#2}}
\newcommand{\legs}[2]{\genfrac{[}{]}{}{}{#1}{#2}}
\def\moverlay{\mathpalette\mov@rlay}
\def\mov@rlay#1#2{\leavevmode\vtop{%
   \baselineskip\z@skip \lineskiplimit-\maxdimen
   \ialign{\hfil$\m@th#1##$\hfil\cr#2\crcr}}}
\newcommand{\charfusion}[3][\mathord]{
    #1{\ifx#1\mathop\vphantom{#2}\fi
        \mathpalette\mov@rlay{#2\cr#3}
      }
    \ifx#1\mathop\expandafter\displaylimits\fi}
\let\@@pmod\pmod
\DeclareRobustCommand{\pmod}{\@ifstar\@pmods\@@pmod}
\def\@pmods#1{\mkern4mu({\operator@font mod}\mkern 6mu#1)}
\begin{document}

\begin{abstract}
In 1992, Reid asked whether hyperbolic $3$--manifolds with the same geodesic length spectra are necessarily commensurable. While this is known to be true for arithmetic hyperbolic $3$--manifolds, the non-arithmetic case is still open. Building towards a negative answer to this question, Futer and Millichap recently constructed infinitely many pairs of non-commensurable, non-arithmetic hyperbolic $3$--manifolds which have the same volume and whose length spectra begin with the same first $m$ geodesic lengths. In the present paper, we show that this phenomenon is surprisingly common in the arithmetic setting. In particular, given any arithmetic hyperbolic $3$--orbifold derived from a quaternion algebra, any finite subset $S$ of its geodesic length spectrum, and any $k \geq 2$, we produce infinitely many $k$--tuples of arithmetic hyperbolic $3$--orbifolds which are pairwise non-commensurable, have geodesic length spectra containing $S$, and have volumes lying in an interval of (universally) bounded length. The main technical ingredient in our proof is a bounded gaps result for prime ideals in number fields lying in Chebotarev sets which extends recent work of Thorner.
\end{abstract}

\maketitle

\section{Introduction}

Given a closed, negatively curved Riemannian manifold $M$ with fundamental group $\pi_1(M)$, each $\pi_1(M)$--conjugacy class $[\gamma]$ has a unique geodesic representative. The multi-set of lengths of these closed geodesics is called the \textbf{geodesic length spectrum} and is denoted by $\mathcal{L}(M)$. The extent to which $\mathcal{L}(M)$ determines $M$ is a basic problem in geometry and is the main topic of the present paper. Specifically,  our interest lies with the following question, which was posed and studied by Reid \cite{R, R-Survey}:

\begin{ques}\label{Q:Reid}
If $M_1,M_2$ are complete, orientable, finite volume hyperbolic $n$--manifolds and $\mathscr L(M_1)=\mathscr L(M_2)$, then are $M_1,M_2$ commensurable?
\end{ques}

The motivation for this question is two-fold. First, Reid \cite{R} gave an affirmative answer to Question \ref{Q:Reid} when $n=2$ and $M_1$ is arithmetic. In particular, if $M_1$ is arithmetic and $\mathcal{L}(M_1) = \mathcal{L}(M_2)$, then $M_1,M_2$ are commensurable and hence $M_2$ is also arithmetic as arithmeticity is a commensurability invariant. Second, the two most common constructions of Riemannian manifolds with the same geodesic length spectra (Sunada \cite{S}, Vign\'eras \cite{V}) both produce manifolds that are commensurable. Question \ref{Q:Reid} has been extensively studied in the arithmetic setting (i.e.,~when $M_1$ is arithmetic). When $n=3$, Chinburg--Hamilton--Long--Reid \cite{CHLR} gave an affirmative answer. Prasad--Rapinchuk \cite{PR} later showed that the geodesic length spectrum of an arithmetic hyperbolic $n$--manifold determines the manifold up to commensurability when $n\not\equiv 1\pmod{4}$ and $n\neq 7$. Most recently, Garibaldi \cite{G} has confirmed the question in dimension $n=7$.

In the non-arithmetic setting (i.e.,~when neither $M_1$ nor $M_2$ are arithmetic), the relationship between the geodesic length spectrum and commensurability class of the manifold is rather mysterious. To our knowledge, the only explicit work in this area is Millichap \cite{CM} and Futer--Millichap \cite{FM}. In \cite{FM}, which extends work from \cite{CM}, Futer and Millichap produce, for every $m \geq 1$, infinitely many pairs of non-commensurable hyperbolic $3$--manifolds which have the same volume and the same $m$ shortest geodesic lengths. Additionally, they give an upper bound on the volume of their manifolds as a function of $m$. Inspired by \cite{FM}, in this paper we consider the following question.

\begin{ques}
Let $M$ be a hyperbolic $3$--orbifold and $S$ be a finite subset of $\mathscr L(M)$. What can one say about the set of hyperbolic $3$--orbifolds $N$ which are not commensurable with $M$ and for which $\mathscr L(N)$ contains $S$?
\end{ques}

This question was previously studied by the authors in \cite{LMPT}. Let $\pi(V,S)$ denote the maximum cardinality of a collection of pairwise non-commensurable arithmetic hyperbolic $3$--orbifolds derived from quaternion algebras, each of which has volume less than $V$ and geodesic length spectrum containing $S$. In \cite{LMPT}, it was shown that, if $\pi(V,S)\to\infty$ as $V\to\infty$, then there are integers $1\leq r,s\leq |S|$ and constants $c_1,c_2>0$ such that \[\frac{c_1V}{\log(V)^{1-\frac{1}{2^r}}} \leq \pi(V,S) \leq \frac{c_2V}{\log(V)^{1-\frac{1}{2^s}}}\] for all sufficiently large $V$. This shows that not only is it quite common for an arithmetic hyperbolic $3$--orbifold to share large portions of its geodesic length spectrum with other (non-commensurable) arithmetic hyperbolic $3$--orbifolds, but that the cardinality of sets of commensurability classes of such orbifolds grows relatively fast. We note that the hypothesis that $\pi(V,S)\to\infty$ as $V\to\infty$ is necessary as there exist subsets $S$ for which $\pi(V,S)$ is non-zero yet eventually constant. Examples of such sets were given in \cite{L} in the context of hyperbolic surfaces using a construction that easily generalizes to hyperbolic $3$--manifolds.

We now state our main geometric result.

\begin{thm}\label{maintheorem}
Let $M$ be an arithmetic hyperbolic $3$--orbifold which is derived from a quaternion algebra and let $S$ be a finite subset of the length spectrum of $M$. Suppose that $\pi(V,S)\to\infty$ as $V\to\infty$. Then, for every $k\geq 2$, there is a constant $C>0$ such that there are infinitely many $k$--tuples $M_1,\dots, M_k$ of arithmetic hyperbolic $3$--orbifolds which are pairwise non-commensurable, have length spectra containing $S$, and volumes satisfying $\abs{\mathrm{vol}(M_i)-\mathrm{vol}(M_j)}<C$ for all $1\leq i,j\leq k$.
\end{thm}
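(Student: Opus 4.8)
The plan is to keep the invariant trace field fixed throughout each $k$--tuple and to vary the invariant quaternion algebra by a single ramified prime, selected via a bounded--gaps theorem so that the resulting volumes are forced to cluster.

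\emph{Translation into number theory.} The first step is to recall from \cite{LMPT} the arithmetic encoding of the objects in question. An arithmetic hyperbolic $3$--orbifold $N$ derived from a quaternion algebra with $S\subseteq\mathscr L(N)$ has an invariant trace field $k_N$ with a unique complex place and an invariant quaternion algebra $B_N$ ramified at every real place of $k_N$, and the condition $S\subseteq\mathscr L(N)$ is equivalent to the embeddability into $B_N$ of a fixed finite list $L_1,\dots,L_s$ of quadratic extensions of $k_N$ determined by $S$; in particular every finite prime ramifying in $B_N$ is non--split in each $L_i$. Conversely, by Borel's volume formula, if $k'$ is a field of this type and $T$ is a finite set of finite primes of $k'$ each non--split in all of $L_1,\dots,L_s$, with $\#T+(\text{number of real places of }k')$ even, then there is an arithmetic hyperbolic $3$--orbifold $N_T$ whose invariant quaternion algebra ramifies exactly at the real places of $k'$ and at the primes of $T$, with $S\subseteq\mathscr L(N_T)$ and $\mathrm{vol}(N_T)$ equal to a positive constant depending only on $k'$ and $S$ times $\prod_{\mathfrak q\in T}(\mathrm N\mathfrak q-1)$. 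Finally, the hypothesis $\pi(V,S)\to\infty$ forces, through the lower--bound analysis of \cite{LMPT}, the existence of at least one admissible field $k'$ for which the set $\mathscr P$ of prime ideals of $k'$ non--split in every $L_i$ has positive Dirichlet density, hence is infinite. Fix such a $k'$ and let $r$ be its number of real places.

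\emph{A one--parameter family of non--commensurable orbifolds.} Choose $t\in\{1,2\}$ with $t\equiv r\pmod 2$, and fix distinct primes $\mathfrak q_1^{\circ},\dots,\mathfrak q_{t-1}^{\circ}\in\mathscr P$ (the empty list when $t=1$), which is possible because $\mathscr P$ is infinite. For a prime $\mathfrak q\in\mathscr P$ distinct from the $\mathfrak q_i^{\circ}$, let $N_{\mathfrak q}:=N_{\{\mathfrak q_1^{\circ},\dots,\mathfrak q_{t-1}^{\circ},\mathfrak q\}}$; by the previous step this is an arithmetic hyperbolic $3$--orbifold with $S\subseteq\mathscr L(N_{\mathfrak q})$ and $\mathrm{vol}(N_{\mathfrak q})=c'(\mathrm N\mathfrak q-1)$ for a constant $c'$ independent of $\mathfrak q$. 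If $\mathfrak q\neq\mathfrak q'$, then $N_{\mathfrak q}$ and $N_{\mathfrak q'}$ have the same invariant trace field $k'$ but invariant quaternion algebras ramifying at different finite prime sets, hence non--isomorphic; since the pair (invariant trace field, invariant quaternion algebra) is a complete commensurability invariant for arithmetic orbifolds derived from quaternion algebras, $N_{\mathfrak q}$ and $N_{\mathfrak q'}$ are non--commensurable. It therefore suffices to prove: for every $k\geq 2$ there is a finite $H$ such that, for infinitely many $x$, at least $k$ distinct primes of $\mathscr P$ have norm in $[x,x+H]$. Indeed, for such a window the corresponding orbifolds $N_{\mathfrak q^{(1)}},\dots,N_{\mathfrak q^{(k)}}$ are pairwise non--commensurable, each contains $S$ in its length spectrum, and their volumes all lie in an interval of length $c'H=:C$; running $x$ through an infinite, widely spaced sequence of such values produces infinitely many distinct $k$--tuples.

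\emph{Bounded gaps in the Chebotarev set $\mathscr P$.} What remains is a statement purely about primes: that $\mathscr P$ admits bounded gaps in the sense just isolated. Passing to the cofinite subset of degree--one primes of $\mathscr P$ (still of positive density), membership is a Chebotarev condition on the rational prime $p$ lying below, namely that the Frobenius of $p$ in the Galois closure over $\mathbf Q$ of the compositum $L_1\cdots L_s$ lie in an explicit nonempty, conjugation--stable subset. This is precisely the setting of the main number--theoretic result of this paper, a bounded--gaps theorem for primes in Chebotarev sets over number fields that extends Thorner's theorem by running the Maynard--Tao sieve against a Bombieri--Vinogradov--type level--of--distribution estimate for the relevant family of primes; the positivity of the density of $\mathscr P$, again guaranteed by $\pi(V,S)\to\infty$, is exactly the input that lets the sieve run and produce $k$--tuples for every $k$. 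This number--theoretic ingredient is the heart of the matter, the geometric steps above being essentially bookkeeping on top of \cite{LMPT} and Borel's formula; granting it, the construction of the previous paragraph completes the proof of Theorem \ref{maintheorem}.
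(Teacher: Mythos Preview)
Your approach is essentially the same as the paper's: fix the invariant trace field, parametrize a family of quaternion algebras by a single varying finite ramified prime drawn from a Chebotarev set, invoke the bounded--gaps theorem (Theorem~\ref{thm:algreduction}) to cluster the norms, and read off clustered volumes from Borel's formula. The only cosmetic difference is that the paper enlarges $\Ram(B)$ by two extra primes $\{P_0,P_i\}$ (so that Lemma~\ref{lenspeclem} applies directly with $\Ram(B)\subsetneq\Ram(B_i)$), whereas you build the algebras from scratch with ramification at the real places plus one or two finite primes according to parity; both choices guarantee $\Ram_f\neq\emptyset$, which is what makes the Chinburg--Friedman order--embedding go through.

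Two points of looseness are worth tightening. First, your ``admissible field $k'$'' should simply be $K$, the invariant trace field of $M$: the quadratic extensions $L_i$ are extensions of $K$ attached to the chosen loxodromic elements of $\pi_1(M)$, so there is no freedom to pass to another base field; the role of the hypothesis $\pi(V,S)\to\infty$ is exactly (as in the paper, via \cite{L}) to force $[L_1\cdots L_r:K]=2^r$, which is what gives the Chebotarev class $(1,\dots,1)$ positive density. Second, the assertion that $S\subseteq\mathscr L(N)$ is \emph{equivalent} to embeddability of the fields $L_i$ into $B_N$ overstates things: one needs the quadratic \emph{orders} $\Omega_i$ to embed into a maximal order of $B_N$, and this is where \cite[Thm~3.3]{CF} (requiring $\Ram_f\neq\emptyset$) enters, as in Lemma~\ref{lenspeclem}. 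With those clarifications your argument is correct and matches the paper's.
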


We note that the main novelty of Theorem \ref{maintheorem} is that we are able to impose a great amount of control on the volumes of the orbifolds $M_1,\dots,M_k$. As a corollary to Theorem \ref{maintheorem} we are able to show (see Corollary \ref{cor:manifolds}) that, when $M$ is a hyperbolic $3$--manifold arising from the elements of reduced norm one in a maximal quaternion order, the orbifolds $M_1,\dots, M_k$ produced by Theorem \ref{maintheorem} may be taken to be manifolds.

The main technical ingredient in the proof of Theorem \ref{maintheorem} is a result showing that there are bounded gaps between prime ideals in number fields which lie in certain Chebotarev sets (see Theorem \ref{thm:algreduction}). This extends a theorem of Thorner \cite{thorner14}. All of these results stem from the seminal work of Zhang \cite{zhang} and Maynard--Tao \cite{maynard} on bounded gaps between primes. The techniques employed by Maynard and Tao, in particular, have proven fruitful in resolving a wide array of interesting questions within number theory. The present paper is yet another example of the impact of their ideas.

\section{Arithmetic hyperbolic $3$--orbifolds}

In this brief section, we review the construction of arithmetic lattices in $\PSL(2,\C)$. For a more detailed treatment of this topic, we refer the reader to \cite{MR}. Given a number field $K$ with ring of integers $\mathcal{O}_K$ and a $K$--quaternion algebra $B$, the set of places of $K$ which ramify in $B$ will be denoted by $\Ram(B)$. It is known that $\Ram(B)$ is a finite set of even cardinality. The subset of $\Ram(B)$ consisting of the finite (resp.~infinite) places of $K$ which ramify in $B$ will be denoted by $\Ram_f(B)$ (resp.~$\Ram_\infty(B)$). By the Albert--Brauer--Hasse--Noether theorem, if $B_1$ and $B_2$ are quaternion algebras over $K$, then $B_1\cong B_2$ if and only if $\Ram(B_1)=\Ram(B_2)$. An \textbf{order} of $B$ is a subring $\mathcal O < B$ which is finitely-generated as an $\mathcal O_K$--module and with $B = \mathcal{O} \otimes_{\mathcal{O}_K} K$. An order is \textbf{maximal} if it is maximal with respect to the partial order induced by inclusion.

Fixing a maximal order $\mathcal O < B$, we will denote by $\mathcal O^1$ the multiplicative group consisting of the units of $\mathcal O$ with reduced norm $1$. Via $B\otimes_K K_\nu \cong \Mat(2,\C)$, the image of $\mathcal O^1$ in $\PSL(2,\C)$ is a discrete subgroup with finite covolume which we will denote by $\Gamma^1_{\mathcal O}$. The group $\Gamma^1_{\mathcal O}$ is cocompact precisely when $B$ is a division algebra. A subgroup $\Gamma$ of $\PSL(2,\C)$ is an \textbf{arithmetic Kleinian group} if it is commensurable with a group of the form $\Gamma^1_{\mathcal O}$. A hyperbolic $3$--orbifold $M={\bf H}^3/\Gamma$ is \textbf{arithmetic} if its orbifold fundamental group $\pi_1(M)=\Gamma$ is an arithmetic Kleinian group. An arithmetic hyperbolic $3$--orbifold is \textbf{derived from a quaternion algebra} if its fundamental group is contained in a group of the form $\Gamma^1_{\mathcal O}$.

For a discrete subgroup $\Gamma < \PSL(2,\C)$, the \textbf{invariant trace field} $K\Gamma$ of $\Gamma$ is the field $\Q(\tr(\gamma^2) : \gamma\in \Gamma)$. Provided $\Gamma$ is a lattice, the invariant trace field is a number field by Weil Local Rigidity. We define $B\Gamma$ to be the $K\Gamma$--subalgebra of $\mathrm{M}(2,\C)$ generated by $\set{\gamma^2~:~\gamma \in \Gamma}$. Provided $\Gamma$ is non-elementary, which is the case when $\Gamma$ is a lattice, $B\Gamma$ is a quaternion algebra over $K\Gamma$ which is called the \textbf{invariant quaternion algebra} of $\Gamma$. The invariant trace field and invariant quaternion algebra of an arithmetic hyperbolic $3$--orbifold are complete commensurability class invariants in the sense that, if $\Gamma_1$ and $\Gamma_2$ are arithmetic Kleinian groups, then the arithmetic hyperbolic $3$--orbifolds ${\bf H}^3/\Gamma_1$ and ${\bf H}^3/\Gamma_2$ are commensurable if and only if $K\Gamma_1\cong K\Gamma_2$ and $B\Gamma_1\cong B\Gamma_2$ (see \cite[Ch 8.4]{MR}).

\section{Bounded gaps between primes in number fields}

For the number-theoretic background assumed in this section, we refer the reader to \cite[Ch 3, \S\S 2 -- 3]{Janusz}. Before stating our bounded gap result, we set some notation. Suppose that $F/K$ is a Galois extension of number fields. By a \textbf{prime ideal of a number field}, we mean a nonzero prime ideal of its ring of integers. Let $P$ be a prime ideal of $K$ unramified in $F$, and let $Q$ be a prime ideal of $F$ lying above $P$. We let $\legs{F/K}{Q} \in \Gal(F/K)$ denote the Frobenius automorphism associated to $Q$. Replacing $Q$ with a different prime $Q'$ above $P$ replaces $\legs{F/K}{Q}$ with $\sigma \legs{F/K}{Q} \sigma^{-1}$ for a certain $\sigma \in \Gal(F/K)$; thus, it makes sense to define the Frobenius conjugacy class $\leg{F/K}{P}$ as the conjugacy class of $\legs{F/K}{Q}$ (inside $\Gal(F/K)$) for an arbitrary prime $Q$ of $F$ lying above $P$.

\begin{theorem}\label{thm:algreduction}
Let $L/K$ be a Galois extension of number fields, let $\Cc$ be a conjugacy class of $\Gal(L/K)$, and let $k$ be a positive integer. Then, for a certain constant $c = c_{L/K,\Cc,k}$, there are infinitely many $k$--tuples $P_1, \dots, P_k$ of prime ideals of $K$ for which the following hold:
\begin{enumerate}
\item $\leg{L/K}{P_1} = \dots = \leg{L/K}{P_k} = \Cc$,
\item $P_1,\dots, P_k$ lie above distinct rational primes,
\item each of $P_1, \dots, P_k$ has degree $1$,
\item $|N(P_i)-N(P_j)| \leq c$, for each pair of $i, j \in \{1,2, \dots, k\}$.
\end{enumerate}
\end{theorem}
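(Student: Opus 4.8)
The plan is to reduce Theorem~\ref{thm:algreduction} to the case $K=\Q$, where it follows from Thorner's theorem~\cite{thorner14}, by translating conditions (1)--(3) into a single Chebotarev condition on the rational prime lying below each $P_i$.

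First I would pass to the Galois closure $\widetilde L$ of $L$ over $\Q$ and set up the splitting dictionary. Write $G=\Gal(\widetilde L/\Q)$, $H_K=\Gal(\widetilde L/K)$, and $H_L=\Gal(\widetilde L/L)$; since $L/K$ is Galois, $H_L$ is normal in $H_K$ and $\Gal(L/K)\cong H_K/H_L$. Let $p$ be a rational prime unramified in $\widetilde L$, fix a prime of $\widetilde L$ above $p$, and let $\sigma\in G$ be its Frobenius. Classical decomposition theory identifies the primes of $K$ above $p$ with the double cosets in $H_K\backslash G/\la\sigma\ra$: the prime $P_g$ attached to $H_K g\la\sigma\ra$ has residue degree $1$ over $\Q$ precisely when $g\sigma g^{-1}\in H_K$, and in that case $\leg{L/K}{P_g}$ is the conjugacy class in $H_K/H_L$ of the image of $g\sigma g^{-1}$. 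Motivated by this, I would introduce the conjugation-stable subset
\[
\mathcal H=\set{\,\sigma\in G \;:\; \exists\, g\in G,\ g\sigma g^{-1}\in H_K \text{ and its image in } H_K/H_L \text{ lies in } \Cc\,},
\]
which is nonempty, since any lift to $H_K$ of an element of $\Cc$ belongs to it. The payoff of the dictionary is the implication: if $p$ is unramified in $\widetilde L$ and $\leg{\widetilde L/\Q}{p}\subseteq\mathcal H$, then $p$ has a degree-one prime $P$ of $K$ lying above it with $\leg{L/K}{P}=\Cc$, and for such a prime $N(P)=p$.

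Next I would import bounded gaps over $\Q$. Fix a single conjugacy class $\Cc'$ of $G$ with $\Cc'\subseteq\mathcal H$, and apply Thorner's theorem~\cite{thorner14} to the extension $\widetilde L/\Q$ and the class $\Cc'$, in the form that produces, for our given $k$, a constant $c=c(\widetilde L/\Q,\Cc',k)$ and infinitely many intervals of length at most $c$, each containing at least $k$ rational primes $p$ with $\leg{\widetilde L/\Q}{p}=\Cc'$. Discarding the finitely many primes ramified in $\widetilde L$, from each such interval I would select $k$ distinct such primes $p_1,\dots,p_k$ and, by Step~1, a degree-one prime $P_i$ of $K$ above $p_i$ with $\leg{L/K}{P_i}=\Cc$. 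Then (1) and (3) hold by construction, (2) holds because the $p_i$ are distinct rational primes, and (4) holds because $N(P_i)=p_i$ and the $p_i$ lie in an interval of length at most $c$. Letting the intervals tend to infinity yields infinitely many distinct such $k$-tuples, completing the reduction.

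The genuinely hard analytic input---a Bombieri--Vinogradov-type equidistribution estimate for primes restricted to a Chebotarev set with a positive level of distribution, fed into the Maynard--Tao sieve---is entirely contained in Thorner's work. Consequently I expect the only real obstacle on our end to be bookkeeping: pinning down the splitting dictionary of Step~1 precisely, in particular the identification of $\leg{L/K}{P}$ with a Frobenius in $\widetilde L/\Q$, while correctly tracking the conjugacy ambiguities that occur on both sides of the dictionary.
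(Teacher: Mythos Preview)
Your proposal is correct and follows essentially the same route as the paper: reduce to Thorner's $K=\Q$ result by passing to the Galois closure $F=\widetilde L$ of $L/\Q$ and identifying a conjugacy class $\Cc'\subseteq\Gal(F/\Q)$ whose primes admit degree-one primes of $K$ above them with Frobenius $\Cc$ in $L/K$. The only difference is cosmetic: where you build $\Cc'$ via the explicit double-coset splitting dictionary, the paper sidesteps this bookkeeping by invoking Chebotarev to exhibit a single degree-one prime $P_0$ of $K$ with $\leg{L/K}{P_0}=\Cc$ and then setting $\Cc'=\leg{F/\Q}{p_0}$ for $p_0$ the rational prime below $P_0$.
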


When $K=\Q$, Theorem \ref{thm:algreduction} was proved by Thorner \cite{thorner14}. The following proposition allows us to reduce to that case.

\begin{prop}\label{prop:algreduction} Let $L/K$ be a Galois extension of number fields, let $\Cc$ be a conjugacy class of $\Gal(L/K)$, and let $F$ be the Galois closure of $L/\Q$. There is a conjugacy class $\Cc'$ of $\Gal(F/\Q)$ for which the following holds. If $p \in \N$ is a prime for which $\leg{F/\Q}{p} = \Cc'$, then there is a prime ideal $P$ of $K$ lying above $p$ for which
\begin{enumerate}
\item $\leg{L/K}{P}=\Cc$,
\item $N(P)=p$.
\end{enumerate}
\end{prop}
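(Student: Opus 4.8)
The plan is to reduce the statement to elementary manipulations with decomposition groups in the tower $\Q \subseteq K \subseteq L \subseteq F$, and then to read off the correct class $\Cc'$. Write $G = \Gal(F/\Q)$ and $H = \Gal(F/K) \leq G$; since $F/\Q$ is Galois so is $F/K$, and since $L/K$ is Galois, restriction to $L$ gives a surjection $r \colon H \to \Gal(L/K)$ with kernel $\Gal(F/L)$. I would choose any $\sigma \in H$ with $r(\sigma) \in \Cc$ (possible since $\Cc$ is nonempty) and define $\Cc'$ to be the conjugacy class of $\sigma$ inside $G$. The key structural point, and the reason for insisting that $\sigma$ be taken inside the subgroup $H$ rather than as an arbitrary preimage of $\Cc$ in $G$, is that membership of $\sigma$ in $H$ is exactly what will force $P$ to have residue degree one over $p$.

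Next I would use the standard description of $\leg{F/\Q}{p}$: for $p$ unramified in $F$ (automatic once $\leg{F/\Q}{p}$ is defined), the Frobenius automorphisms $\legs{F/\Q}{\mathfrak{P}}$, as $\mathfrak{P}$ ranges over the primes of $F$ above $p$, fill out a single conjugacy class of $G$, namely $\leg{F/\Q}{p}$. Hence, if $\leg{F/\Q}{p} = \Cc'$, then since $\sigma \in \Cc'$ I may pick a prime $\mathfrak{P}$ of $F$ above $p$ with $\legs{F/\Q}{\mathfrak{P}} = \sigma$. Set $P = \mathfrak{P} \cap \calO_K$. The decomposition group of $\mathfrak{P}$ over $\Q$ is the cyclic group $\langle\sigma\rangle$, while the decomposition group of $\mathfrak{P}$ over $K$ is $\langle\sigma\rangle \cap H$, which equals $\langle\sigma\rangle$ because $\sigma \in H$; comparing the orders of these two groups gives $f(P/p) = 1$, i.e.\ $N(P) = p$, which is conclusion (ii).

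For conclusion (i), I would note that, $N(P) = p$ having been established, the Frobenius $\legs{F/K}{\mathfrak{P}}$ acts on the residue field $\calO_F/\mathfrak{P}$ by $x \mapsto x^{N(P)} = x^{p}$, exactly as $\sigma = \legs{F/\Q}{\mathfrak{P}}$ does; since both lie in the decomposition group of $\mathfrak{P}$ over $K$ (which, as just observed, coincides with that over $\Q$) and reduction is injective on the decomposition group of an unramified prime, $\legs{F/K}{\mathfrak{P}} = \sigma$. Restricting along $r$, the Frobenius of $Q := \mathfrak{P} \cap \calO_L$ over $K$ equals $r(\sigma)$; hence $\leg{L/K}{P}$, which by definition is the $\Gal(L/K)$-conjugacy class of $\legs{L/K}{Q}$, is the conjugacy class of $r(\sigma)$, namely $\Cc$. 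This finishes the proof.

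I do not expect a genuine obstacle here: the content is bookkeeping with decomposition (and inertia) groups, and the one thing requiring care is keeping the residue-degree arithmetic in the tower $\Q \subseteq K \subseteq F$ consistent --- in particular verifying that the condition ``$\sigma \in H$'' is genuinely equivalent to ``$P$ has residue degree $1$ over $p$'', and that Frobenius restricts correctly from $F/K$ to $L/K$ in exactly that case. It is also worth remarking that discarding the finitely many primes ramified in $F$ is harmless, since $\leg{F/\Q}{p}$ is only defined for unramified $p$ anyway.
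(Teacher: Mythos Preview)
Your argument is correct and follows essentially the same route as the paper's proof: both pick an element $\sigma \in H = \Gal(F/K)$ whose restriction to $L$ lies in $\Cc$, set $\Cc'$ to be its $G$--conjugacy class, and then run the same decomposition--group bookkeeping in the tower $\Q \subseteq K \subseteq F$ to obtain $f(P/p)=1$ and the correct Frobenius restriction. The only noteworthy difference is that the paper manufactures $\sigma$ indirectly, by invoking the Chebotarev density theorem to first locate a degree--one prime $P_0$ of $K$ with $\leg{L/K}{P_0}=\Cc$ and then taking $\sigma = \legs{F/\Q}{Q_0}$ for some $Q_0$ above $P_0$; your direct algebraic choice of $\sigma$ via surjectivity of $r\colon H \to \Gal(L/K)$ is cleaner and avoids that appeal to Chebotarev, but the two constructions yield the same $\Cc'$ and the downstream verification is identical.
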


\begin{proof} The Chebotarev density theorem guarantees that a positive proportion of the prime ideals $P$ of $K$ satisfy $\leg{L/K}{P}=\Cc$. Since almost all prime ideals of $K$ have degree $1$ and only finitely many rational primes ramify in $F$, we may fix a prime ideal $P_0$ of $K$ with $\leg{L/K}{P_0}=\Cc$, with $P_0$ having degree $1$, and with $P_0\cap \Z = p_0\Z$ (say) unramified in $F$. Let $Q_0$ be a prime ideal of $F$ lying above $P_0$. We claim that $\Cc' = \leg{F/\Q}{p_0}$ has the desired properties. Indeed, suppose that $p$ is a rational prime with $\leg{F/\Q}{p} = \Cc'$. Since $\leg{F/\Q}{p} = \leg{F/\Q}{p_0}$ and $\leg{F/\Q}{p_0}$ is the conjugacy class of $\legs{F/\Q}{Q_0}$, we may choose a prime ideal $Q$ of $F$ lying above $p$ with $\legs{F/\Q}{Q} = \legs{F/\Q}{Q_0}$. Setting $P = Q_0 \cap \mathcal{O}_K$, we see that $P$ is a prime ideal of $K$ lying above $p$.

We proceed to show that (i) and (ii) hold for this choice of $P$. Note first that, with $f(\cdot/\cdot)$ denoting the inertia degree and $D(\cdot/\cdot)$ denoting the decomposition group,
\begin{equation}\label{Gap:Eq1}
f(P/p) = \frac{f(Q/p)}{f(Q/P)} = \frac{\abs{D(Q/p)}}{\abs{D(Q/P)}} = \frac{\abs{D(Q/p)}}{\abs{(D(Q/p) \cap \Gal(F/K))}}.
\end{equation}
Similarly,
\begin{equation}\label{Gap:Eq2}
f(P_0/p_0) = \frac{\abs{D(Q_0/p_0)}}{\abs{(D(Q_0/p_0) \cap \Gal(F/K))}}.
\end{equation}
Now, $D(Q/p)$ is cyclic and generated by $\legs{F/\Q}{Q}$, while $D(Q_0/p_0)$ is generated by $\legs{F/\Q}{Q_0}$. Since $\legs{F/\Q}{Q} = \legs{F/\Q}{Q_0}$, we have $D(Q/p) = D(Q_0/p_0)$, and so $f(P/p) = f(P_0/p_0)$ via \eqref{Gap:Eq1}, \eqref{Gap:Eq2}. We chose $P_0$ to have degree $1$, and so $f(P/p)=1$. This proves property (ii). To show (i), note that $\leg{L/K}{P}$ is the conjugacy class of $\legs{L/K}{Q\cap L} = \legs{F/K}{Q}\bigg\rvert_{L} = \legs{F/\Q}{Q}\bigg\rvert_{L}$.
The last equality uses that $P$ has degree $1$, so that $\legs{F/K}{Q} = \legs{F/\Q}{Q}$. Similarly, $\leg{L/K}{P_0} = \legs{F/\Q}{Q_0}\big\rvert_{L}$. Since $\legs{F/\Q}{Q}=\legs{F/\Q}{Q_0}$, it follows that $\leg{L/K}{P} = \leg{L/K}{P_0}=\Cc$, which is (i).
\end{proof}

\begin{proof}[Proof of Theorem \ref{thm:algreduction}]
Choose $F$ and $\Cc'$ as in Proposition \ref{prop:algreduction}. By that proposition, it suffices to show that if $\Pp$ is the set of primes $p$ with $\leg{F/\Q}{p} = \Cc'$, then there are infinitely many $k$--tuples of elements of $\Pp$ lying in bounded length intervals. This is a direct consequence of Thorner's generalization of the Maynard--Tao theorem to Chebotarev sets \cite[Thm 1]{thorner14}.
\end{proof}

\section{Proof of Theorem \ref{maintheorem}}

Let $M={\bf H}^3/\Gamma$ be a compact arithmetic hyperbolic $3$--orbifold which is derived from a quaternion algebra $B$ over $K$ and let $S=\{\ell_1,\dots, \ell_r\}$ be a finite subset of the length spectrum of $M$. For each $i=1,\dots r$, let $\gamma_i$ be a loxodromic element of $\Gamma$ whose axis in ${\bf H}^3$ projects to a closed geodesic in $M$ having length $\ell_i$, and let $\lambda_i$ be the eigenvalue of a lift of $\gamma_i$ to $\SL(2,\mathbf C)$ for which $|\lambda_i|>1$. For each $i=1,\dots, r$, we let $L_i=K(\lambda_i)$ and $\Omega_i\subset L_i$ be a quadratic $\mathcal O_K$--order containing a preimage in $L_i$ of $\gamma_i$.

\begin{lem}\label{lenspeclem}
Let $B'$ be a quaternion algebra over $K$ for which $\Ram(B)\subsetneq \Ram(B')$ and $\Ram_f(B)\neq\emptyset$. If $B'$ admits embeddings of $L_1,\dots, L_r$ then the commensurability class defined by $(K,B')$ contains a hyperbolic $3$--orbifold $M'$ which is not commensurable to $M$ and has length spectrum containing $S$. In fact, $M'$ can be taken to be of the form $M'={\bf H}^3/\Gamma_{\mathcal O'}^1$, where $\mathcal O'$ is a maximal order of $B'$.
\end{lem}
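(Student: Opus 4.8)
The plan is to translate the hypothesis that $B'$ admits embeddings of the $L_i$ into the geometric conclusion, using the standard dictionary between maximal orders, quaternion algebras, and the length spectrum. First I would note that since $\Ram(B) \subsetneq \Ram(B')$, the pairs $(K,B)$ and $(K,B')$ define distinct quaternion algebras by Albert--Brauer--Hasse--Noether, hence distinct commensurability classes of arithmetic Kleinian groups (by the complete commensurability invariant discussed in Section 2); moreover $B'$ is a division algebra because $\Ram_f(B') \supseteq \Ram_f(B) \neq \emptyset$, so any group $\Gamma^1_{\mathcal O'}$ is cocompact and $M' = {\bf H}^3/\Gamma^1_{\mathcal O'}$ is a compact arithmetic hyperbolic $3$--orbifold not commensurable with $M$. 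Since $\Ram(B')$ differs from $\Ram(B)$ only at finite places (both $K$ and $B,B'$ are set up so that $K$ has exactly one complex place and $B$ is ramified at all real places, conditions inherited by $B'$), $B'$ still gives hyperbolic — rather than higher-rank — arithmetic lattices; I would spell this out by observing $\Ram_\infty(B') = \Ram_\infty(B)$, which forces the archimedean behavior needed for $B'\otimes_K K_\nu \cong \M(2,\C)$ at the complex place.

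Next I would produce the geodesics of the prescribed lengths inside $M'$. For each $i$, the field $L_i = K(\lambda_i)$ embeds in $B'$ by hypothesis. Fix such an embedding; its image contains a preimage of the eigenvalue $\lambda_i$, and after conjugating we may arrange that the corresponding $\mathcal O_K$-order $\Omega_i$ (which by construction contains a preimage of $\gamma_i$) is optimally embedded in some maximal order of $B'$. The key input here is the classical Chinburg--Friedman / Maclachlan--Reid correspondence: an $\mathcal O_K$-order $\Omega$ of a quadratic extension $L/K$ embeds in \emph{some} maximal order of a quaternion algebra $B'$ precisely when every place of $K$ ramifying in $B'$ is non-split in $L$ — and since these are exactly the places we are free to impose as ramification, the hypothesis ``$B'$ admits embeddings of $L_1,\dots,L_r$'' delivers this. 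Then the image of $\gamma_i$ under the embedding $\Omega_i \hookrightarrow \mathcal O' \hookrightarrow B'$ is a loxodromic element of $\Gamma^1_{\mathcal O'}$ whose eigenvalue is still $\lambda_i$ (embeddings preserve the characteristic polynomial, hence the trace and eigenvalues), so its axis projects to a closed geodesic of $M'$ of length exactly $\ell_i = 2\log|\lambda_i|$. Hence $S \subseteq \mathscr L(M')$.

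The main obstacle is the final sentence: arranging that a \emph{single} maximal order $\mathcal O'$ of $B'$ simultaneously receives optimal embeddings of all the $\Omega_i$ at once, rather than a different maximal order for each $i$. The point is that the number of conjugacy classes (equivalently, the type number) of maximal orders of $B'$ is finite, and the local embedding conditions at each finite place are determined by splitting behavior in the $L_i$; one checks that whether $\Omega_i$ embeds in a given maximal order is governed place-by-place, and the obstruction to a common maximal order is controlled by a finite idelic quotient (essentially a ray class group). Here I would invoke the known fact — see \cite[Ch.~12]{MR} for the relevant Eichler-type results — that when $B'$ is ramified at some finite place, the relevant selectivity obstruction vanishes, so every $\Omega_i$ embeds in every maximal order of $B'$; since $\Ram_f(B') \neq \emptyset$, this applies and we may take $M' = {\bf H}^3/\Gamma^1_{\mathcal O'}$ for any maximal order $\mathcal O'$ of $B'$. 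This completes the proof.
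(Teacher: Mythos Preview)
Your proof is correct and follows essentially the same route as the paper: non-commensurability from $\Ram(B)\subsetneq\Ram(B')$ via Albert--Brauer--Hasse--Noether, the archimedean check that $\Ram_\infty(B')=\Ram_\infty(B)$ so $B'$ still yields Kleinian groups, and then the Chinburg--Friedman selectivity result to embed every $\Omega_i$ into any maximal order $\mathcal O'$ once $\Ram_f(B')\neq\emptyset$. The only difference is packaging: the paper dispatches your ``main obstacle'' in one line by citing \cite[Thm~3.3]{CF} directly (this is exactly the statement that, when $B'$ is ramified at a finite prime, the selectivity obstruction vanishes and every maximal order receives the embedding), whereas you unwind the selectivity mechanism and point to \cite[Ch.~12]{MR}; the precise reference you want is Chinburg--Friedman rather than \cite{MR}.
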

\begin{proof}
Let $B'$ be as in the statement of the lemma and $\mathcal O'$ be a maximal order of $B'$. Because $K$ is the invariant trace field and $B$ is the invariant trace field of an arithmetic Kleinian group, the field $K$ is a number field with a unique complex place and the set $\Ram(B)$ contains all real places of $K$. By hypothesis, $\Ram(B)\subsetneq \Ram(B')$, hence $B'$ is also ramified at all real places of $K$ and $M'={\bf H}^3/\Gamma_{\mathcal O'}^1$ is an arithmetic hyperbolic $3$--orbifold. By hypothesis $B'$ admits embeddings of the quadratic extensions $L_1,\dots, L_r$ of $K$ and is ramified at a finite prime of $K$. By \cite[Thm 3.3]{CF}, $\mathcal O'$ admits embeddings of all of the quadratic orders $\Omega_1, \dots, \Omega_r$. It follows that $\Gamma_{\mathcal O'}^1$ contains conjugates of the loxodromic elements $\gamma_1, \dots, \gamma_r$ and that the length spectrum of the orbifold $M'$ contains $S$. To show that $M'$ is not commensurable to $M$ it suffices to show that $B\not\cong B'$, since the invariant trace field and quaternion algebra are complete commensurability class invariants \cite[Thm 8.4]{MR}. Because two quaternion algebras defined over number fields are isomorphic if and only if their ramification sets are equal, that $B\not\cong B'$ follows from the hypothesis that $\Ram(B)\subsetneq \Ram(B')$.
\end{proof}

\begin{proof}[Proof of Theorem \ref{maintheorem}]
For $M$ as in the statement of Theorem \ref{maintheorem}, let $K,B$ be the invariant trace field and quaternion algebra of $M$, and let $L_1,\dots, L_r$ be the quadratic extensions of $K$ associated to the geodesics lengths in $S$ as defined above. We may assume without loss of generality that these extensions are all distinct. That there are infinitely many non-commensurable arithmetic hyperbolic $3$--orbifolds with length spectra containing $S$ implies that there are infinitely many non-isomorphic $K$--quaternion algebras over $K$ admitting embeddings of the extensions $L_1,\dots, L_r$. This in turn implies that the degree of the compositum $L$ of $L_1,\dots, L_r$ over $K$ has degree $[L:K]=2^r$. These assertions were proven in \cite[\S 6-7]{L} in the context of arithmetic hyperbolic surfaces, and the same proofs apply in the present context of arithmetic hyperbolic $3$--orbifolds. The Galois group $\Gal(L/K)$ is isomorphic to $(\Z/2\Z)^r$ and the primes of $K$ whose Frobenius elements represent the element $(1,\dots, 1)$ correspond to those which are inert in each of the extensions $L_1/K,\dots, L_r/K$. Fix a prime $P_0$ of $K$ whose Frobenius element represents $(1,\dots, 1)$ and which does not lie in $\Ram_f(B)$. By Theorem \ref{thm:algreduction} there is a constant $C_1>0$ such that there are infinitely many $k$--tuples $P_1,\dots, P_k$ of primes of $K$, all of which are inert in the extensions $L_1/K,\dots, L_r/K$ and have norms lying within an interval of length $C_1$. We may assume that none of the primes $P_i$ ramify in $B$. As $M$ is derived from a quaternion algebra, $\pi_1(M) < \Gamma_\mathcal O^1$ for some maximal order $\mathcal O$ of $B$. Finally, by Borel \cite{B}, we have
\[\mathrm{vol}({\bf H}^3/\Gamma_\mathcal O^1)=\frac{|\Delta_K|^{3/2}\zeta_K(2)}{(4\pi^2)^{n_K-1}}\prod_{P \in\Ram_f(B)} \left(N(P)-1\right),\]
where $n_K=[K:\Q]$, $\zeta_K(s)$ is the Dedekind zeta function of $K$, and $\Delta_K$ is the discriminant of $K$.

We now use the primes $P_1,\dots, P_k$ to construct quaternion algebras $B_1,\dots, B_k$ over $K$. For each $i=1,\dots, k$, define $B_i$ to be the unique quaternion algebra over $K$ for which $\Ram(B_i)=\Ram(B)\cup \{P_0,P_i\}$. As $B$ admits embeddings of all of the quadratic extensions $L_i$, no prime of $\Ram(B)$ splits in $L_i/K$. Similarly, none of the primes $P_0, P_1,\dots, P_k$ split in $L_i/K$ for any $i$. The Albert--Brauer--Hasse--Noether theorem implies that a quaternion algebra over a number field $K$ admits an embedding of a quadratic extension of $K$ if and only if no prime which ramifies in the algebra splits in the extension of $K$. This allows us to conclude that all of the quaternion algebras which we have defined are pairwise non-isomorphic and admit embeddings of all of the $L_i$. Let $\mathcal O_1, \dots, \mathcal O_k$ be maximal orders of $B_1,\dots, B_k$. By Lemma \ref{lenspeclem}, the arithmetic hyperbolic $3$--orbifolds $M_i={\bf H}^3/\Gamma^1_{\mathcal O_i}$, which are all pairwise non-commensurable since the algebras $B_1,\dots, B_k$ are pairwise non-isomorphic, have length spectra containing $S$. By \cite{B}, the volume of $M_i$ is equal to $\mathrm{vol}({\bf H}^3/\Gamma_\mathcal O^1)\cdot (N(P_0)-1)(N(P_i)-1)$. As the $k$ primes $P_1,\dots, P_k$ have norms lying in a bounded length interval, the orbifolds $M_1,\dots, M_k$ have volumes lying in a bounded length interval. This completes the proof of Theorem \ref{maintheorem}.
\end{proof}

\section{Producing arithmetic hyperbolic $3$--manifolds}

In this section we prove a variant of Theorem \ref{maintheorem} that produces infinitely many $k$--tuples (for any $k\geq 2$) of arithmetic hyperbolic $3$--{\it manifolds} which are pairwise non-commensurable, have geodesic length spectra containing some fixed set of lengths and have volumes lying in an interval of (universally) bounded length.

\begin{cor}\label{cor:manifolds}Let $M={\bf H}^3/\Gamma_{\mathcal O}^1$ be a compact arithmetic hyperbolic $3$--manifold whose invariant quaternion algebra is ramified at some finite prime and let $S$ be a finite subset of the length spectrum of $M$. Suppose that $\pi(V,S)\to\infty$ as $V\to\infty$. Then, for every $k\geq 2$, there is a constant $C>0$ such that there are infinitely many $k$--tuples $M_1,\dots, M_k$ of arithmetic hyperbolic $3$--manifolds which are pairwise non-commensurable, have length spectra containing $S$, and volumes satisfying $\abs{\mathrm{vol}(M_i)-\mathrm{vol}(M_j)}<C$ for all $1\leq i,j\leq k$.\end{cor}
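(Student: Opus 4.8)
The plan is to deduce Corollary \ref{cor:manifolds} by running the proof of Theorem \ref{maintheorem} essentially verbatim, but with enough extra care at the finite primes to guarantee that the resulting groups $\Gamma^1_{\mathcal O_i}$ are torsion-free, hence that the quotients $M_i = {\bf H}^3/\Gamma^1_{\mathcal O_i}$ are genuine manifolds rather than merely orbifolds. First I would observe that, since $M = {\bf H}^3/\Gamma^1_{\mathcal O}$ is already a manifold, $\Gamma^1_{\mathcal O}$ is torsion-free; a standard fact (see \cite[Ch.~8, \S 8.3]{MR}) is that torsion in $\Gamma^1_{\mathcal O}$ corresponds to embeddings into $\mathcal O$ of quadratic orders $\mathcal O_K[\zeta]$ generated by roots of unity $\zeta$ of prime-power order, so $\Gamma^1_{\mathcal O}$ torsion-free says no such order embeds in $\mathcal O$. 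By the embedding criterion of \cite[Thm 3.3]{CF} (the same tool used in Lemma \ref{lenspeclem}), whether $\mathcal O'$ admits an embedding of a given quadratic order is controlled by local conditions at the finite primes, and in particular is governed by $\Ram_f(B')$. The key point is that enlarging the ramification set only makes it \emph{harder} to embed a quadratic extension: if $B' \supset B$ in the sense $\Ram(B) \subsetneq \Ram(B')$ and a finite prime $P$ lying in $\Ram(B') \setminus \Ram(B)$ does not split in the relevant cyclotomic quadratic extension $K(\zeta)/K$, then the embedding obstruction at $P$ persists, while if $P$ \emph{does} split there, then $K(\zeta)$ does not embed in $B'$ at all.

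The execution then goes as follows. Run the proof of Theorem \ref{maintheorem} with $M$, $K$, $B$, $S$, $L_1,\dots,L_r$, the compositum $L$, and the Frobenius condition $(1,\dots,1)$ exactly as there, producing via Theorem \ref{thm:algreduction} infinitely many $k$-tuples $P_1,\dots,P_k$ of degree-one primes of $K$, inert in each $L_i/K$, avoiding $\Ram_f(B)$, lying above distinct rational primes, and with norms in an interval of bounded length $C_1$; fix also the auxiliary prime $P_0 \notin \Ram_f(B)$ with Frobenius $(1,\dots,1)$. The one modification: when invoking Theorem \ref{thm:algreduction}, I would additionally impose that the $P_i$ (and, separately, that $P_0$) do not split in any of the finitely many quadratic cyclotomic extensions $K(\zeta)/K$ with $\zeta$ a root of unity of prime-power order embeddable in a quaternion algebra over $K$ ramified at all real places of $K$ — there are only finitely many such $\zeta$ to consider because $\mathcal O_K[\zeta]$ has bounded degree over $\mathcal O_K$, so this is a finite union of Chebotarev conditions and can be absorbed into a single Chebotarev set $\Cc'$ in Theorem \ref{thm:algreduction}. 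Define $B_i$ by $\Ram(B_i) = \Ram(B) \cup \{P_0, P_i\}$ as before; these are pairwise non-isomorphic and, by the Albert--Brauer--Hasse--Noether criterion, admit embeddings of all the $L_i$ (no ramified prime splits in any $L_i/K$), so Lemma \ref{lenspeclem} applies and gives $M_i = {\bf H}^3/\Gamma^1_{\mathcal O_i}$ with length spectrum containing $S$, pairwise non-commensurable, and — by Borel's volume formula — with volumes $\mathrm{vol}({\bf H}^3/\Gamma^1_{\mathcal O}) \cdot (N(P_0)-1)(N(P_i)-1)$ lying in an interval of bounded length. It remains to check each $\Gamma^1_{\mathcal O_i}$ is torsion-free: a torsion element would force some $\mathcal O_K[\zeta]$ (with $\zeta$ of prime-power order) to embed in $\mathcal O_i$, hence $K(\zeta)$ to embed in $B_i$; since $K(\zeta)$ already fails to embed in $B$ (as $\Gamma^1_{\mathcal O}$ is torsion-free), either $K(\zeta)$ is not a field over $K$ in the relevant sense, or some prime of $\Ram_f(B)$ splits in $K(\zeta)/K$ — but then that prime also lies in $\Ram_f(B_i)$, so $K(\zeta)$ still fails to embed in $B_i$; and the two newly added primes $P_0, P_i$ were chosen not to split in any such $K(\zeta)/K$, so they create no new embeddings either. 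Hence no $K(\zeta)$ embeds in $B_i$, so $\Gamma^1_{\mathcal O_i}$ is torsion-free and $M_i$ is a manifold.

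I expect the main obstacle to be the bookkeeping in this last torsion-free verification, specifically making the local embedding criterion of \cite[Thm 3.3]{CF} interact cleanly with the enlargement of the ramification set: one must argue carefully that the finite set of "dangerous" cyclotomic quadratic extensions $K(\zeta)/K$ depends only on $K$ (equivalently on $B$) and not on the $B_i$, so that the finitely many extra splitting conditions imposed on $P_0, P_1,\dots,P_k$ via Chebotarev genuinely suffice for all $i$ simultaneously. A secondary point to handle carefully is that the compactness of $M$ (hence of the $M_i$) is inherited because all the $B_i$ are division algebras — they are ramified at finite primes, so they are not isomorphic to $\mathrm{M}(2,K)$ — which is exactly the hypothesis that the invariant quaternion algebra of $M$ is ramified at a finite prime, guaranteeing $\Ram_f(B) \neq \emptyset$ as needed in Lemma \ref{lenspeclem}. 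Modulo these points the corollary follows immediately from Theorem \ref{maintheorem}'s proof together with the standard dictionary between torsion in $\Gamma^1_{\mathcal O}$ and embeddings of cyclotomic orders.
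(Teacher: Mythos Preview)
Your proposal is correct and rests on the same key observation as the paper: enlarging the ramification set can only obstruct, never enable, embeddings of quadratic extensions, so the torsion-freeness of $\Gamma^1_{\mathcal O}$ propagates to each $\Gamma^1_{\mathcal O_i}$. However, you work harder than necessary. The paper does \emph{not} modify the proof of Theorem~\ref{maintheorem} at all, and in particular imposes no extra Chebotarev conditions on $P_0,P_1,\dots,P_k$. The argument is simply: since $\Gamma^1_{\mathcal O}$ is torsion-free and $\Ram_f(B)\neq\emptyset$, \cite[Thm~12.5.4]{MR} says no quadratic cyclotomic extension $F/K$ embeds in $B$; by Albert--Brauer--Hasse--Noether this means some prime of $\Ram(B)$ splits in each such $F$; since $\Ram(B)\subsetneq\Ram(B_i)$, that same prime obstructs the embedding of $F$ into $B_i$, so $\Gamma^1_{\mathcal O_i}$ is torsion-free by the same theorem. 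The behaviour of $P_0,P_i$ in the extensions $K(\zeta)/K$ is irrelevant --- adding primes to the ramification set cannot create new embeddings --- so your additional splitting constraints, while harmless, are redundant. (One small slip: your final paragraph invokes only $\Ram_f(B)$, but the obstructing prime could in principle be archimedean; the paper correctly writes $\Ram(B)$.)
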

\begin{proof}
We will show that our hypotheses on $M$ imply that the orbifolds $M_1,\dots, M_k$ produced by Theorem \ref{maintheorem} in this case are all manifolds. Let $K, B$ be the invariant trace field and quaternion algebra of $M$. As $M$ is a manifold, $\Gamma_{\mathcal O}^1$ is torsion-free and so $B$ does not admit an embedding of any cyclotomic extension $F$ of $K$ with $[F:K]=2$. This follows from \cite[Thm 12.5.4]{MR} and makes use of the fact that $\Ram_f(B)$ is nonempty. The Albert--Brauer--Hasse--Noether theorem therefore implies that, for every cyclotomic extension $F$ of $K$ with $[F:K]=2$, there exists a prime $P\in \Ram(B)$ such that $P$ splits in $F/K$. Let $B_1,\dots, B_k$, $\mathcal O_1, \dots, \mathcal O_k$ and $M_1,\dots, M_k$ be as in the proof of Theorem \ref{maintheorem}. The quaternion algebras $B_1,\dots, B_k$ were defined so that $\Ram(B)\subsetneq \Ram(B_i)$, hence the Albert--Brauer--Hasse--Noether theorem again implies that no cyclotomic extension $F$ of $K$ with $[F:K]=2$ embeds into any of the quaternion algebras $B_i$. By \cite[Thm 12.5.4]{MR}, the groups $\Gamma_{\mathcal O_i}^1$ are all torsion-free, and hence the orbifolds $M_1,\dots, M_k$ are all manifolds.
\end{proof}

\section*{Acknowledgements}
The authors would like to thank Robert J. Lemke Oliver and Jesse Thorner for helpful conversations. D.M.~was supported by NSF grant DMS-1408458. P.P.~was supported by NSF grant DMS-1402268. L.T.~was supported by an AMS Simons Travel Grant and by NSF grant DMS-1440140 while in residence at the Mathematical Sciences Research Institute during the Spring 2017 semester.

\bibliographystyle{amsplain}


\end{document}